\newcommand{\agent}{\text{MO}}
\newcommand{\lmp}{\lambda}
\newcommand{\blmp}{\bm \lambda}
\newcommand{\bnu}{\bm \nu}
\newcommand{\bomega}{\bm \omega}
\newcommand{\Bomega}{\bm \Omega}
\newcommand{\D}{\mathcal{D}}
\newcommand{\E}{\mathcal{E}}
\newcommand{\Eb}{{\bf E}}
\newcommand{\Ed}{\vec{\mathcal{E}}}
\newcommand{\f}{{\bf f}}
\newcommand{\gbf}{\bar g}
\newcommand{\gtf}{\tilde g}
\newcommand{\G}{\mathcal{G}}
\newcommand{\h}{{\bf h}}
\newcommand{\im}{{\bf i}}
\newcommand{\Lb}{\mathcal{L}}
\newcommand{\M}{\mathcal{M}}
\newcommand{\N}{\mathcal{N}}
\newcommand{\Pb}{{\bf P}}
\newcommand{\Rr}{\mathbb{R}}
\newcommand{\Ss}{\N_\mathcal{S}}
\newcommand{\s}{{\bf s}}
\newcommand{\T}{\mathcal{T}}
\newcommand{\pb}{{\bf p}}
\newcommand{\eb}{{\bf e}}
\newcommand{\rb}{{\bf r}}
\newcommand{\x}{{\bf x}}
\newcommand{\X}{{\bf X}}
\newcommand{\y}{{\bf y}}
\newcommand{\bz}{{\bf z}}
\newcommand{\ub}{{\bf u}}
\newcommand{\cb}{{\bf c}}
\newcommand{\pbl}{\underline{\bf p}}
\newcommand{\pbu}{\bar{\bf p}}
\newcommand{\ebl}{\underline{\bf e}}
\newcommand{\ebu}{\bar{\bf e}}
\newcommand{\pl}{\underline{p}}
\newcommand{\el}{\underline{e}}
\newcommand{\eu}{\bar{e}}
\newcommand{\ipsi}{\mathit{\Psi}}
\newcommand{\pu}{\bar{p}}
\newcommand{\vb}{{\bf v}}
\newcommand{\ze}{{\bm 0}}
\newcommand{\thetal}{\underline{\theta}}
\newcommand{\thetau}{\bar{\theta}}
\newcommand{\btheta}{{\bm \theta}}
\newcommand{\Btheta}{{\bm \Theta}}
\newcommand{\removelatexerror}{\let\@latex@error\@gobble}
\providecommand{\ex}[1]{\ensuremath{\mathsmaller{\times}}{\scriptsize10\ensuremath{^{\text{-#1}}}}}
\DeclareMathOperator*{\argmax}{\arg\!\max}
\newtheorem{definition}{Definition}
\newtheorem{lemma}{Lemma}
\newtheorem{theorem}{Theorem}
\newtheorem{corollary}{Corollary}
\title{\LARGE \bf
A Decentralized Mechanism for Computing Competitive Equilibria in Deregulated Electricity Markets$^*$
}
\author{Erik~Miehling and Demosthenis~Teneketzis$^{\dagger}$
\thanks{*This work was supported by NSF grant CNS-1238962.}
\thanks{$^{\dagger}$E. Miehling and D. Teneketzis are with the Department 
of Electrical Engineering and Computer Science, University of Michigan, Ann Arbor, MI, USA, 48109. E-mail: miehling@umich.edu.}}
\begin{document}

\maketitle
\thispagestyle{empty}
\pagestyle{empty}

\begin{abstract}
With the increased level of distributed generation and demand response comes the need for associated mechanisms that can perform well in the face of increasingly complex deregulated energy market structures. Using Lagrangian duality theory, we develop a decentralized market mechanism that ensures that, under the guidance of a market operator, self-interested market participants:\! generation companies (GenCos), distribution companies (DistCos), and transmission companies (TransCos), reach a competitive equilibrium. We show that even in the presence of informational asymmetries and nonlinearities (such as power losses and transmission constraints), the resulting competitive equilibrium is Pareto efficient.
\end{abstract}

\begin{keywords}
Power systems,\,Optimization,\,Decentralized control,\,Network analysis and control,\,Duality theory
\end{keywords}

\section{INTRODUCTION}

From the introduction of the Public Utilities Regulatory Policies Act  (PURPA) in 1978 to the establishment of the Energy Policy Act in 1992, the deregulation of electricity markets in the United States has grown continuously, primarily under the appeal of increased technological competition and innovation. Today, despite cases of market manipulation (such as the California electricity crisis in 2000-2001), many large electricity markets are, at least in some capacity, deregulated. This transition has been centered around the formation of specialized firms for generation, transmission, and distribution, to name a few, with markets typically consisting of the following companies \cite{christie1996load} (termed \emph{market participants}): \emph{generation companies} (GenCos) who produce and sell power, \emph{transmission companies} (TransCos) who own the transmission assets and are responsible for transmitting power across the grid, and \emph{distribution companies} (DistCos) who own the distribution networks and are tasked with buying power from GenCos and distributing it to consumers. 

The primary goal in an energy market is determining an outcome that is not only economically \emph{optimal} (that is, it is \emph{Pareto efficient} \cite{mas1995microeconomic,kirschen2004}) but also satisfies the physical constraints of the system. 
Achieving this goal in a deregulated energy market is complicated by many factors. First, market participants possess informational asymmetries arising from private cost information and localized system knowledge. Additionally, there are power losses within the network, limits on transmission lines, contingency considerations, lack of efficient power storage capabilities, and the underlying, fundamental rule that power flow is dictated by the laws of physics (Kirchhoff's laws). These physical laws result in a phenomenon in power systems termed \emph{loop flow}, creating externalities in the market for electricity \cite{chao1996}.

Centralized market mechanisms are traditionally the approach used for determining the optimal, feasible outcome of the market \cite{stoft2002power}. Under these approaches a centralized market operator receives \emph{bids} from the market participants, in the form of cost/benefit functions and technical constraints, and solves a large-scale centralized optimization problem to determine the market clearing outcome. This outcome consists of a physically feasible operating point as well as a vector of bus-specific power prices termed \emph{locational marginal prices} (LMPs). Unfortunately, centralized mechanisms suffer from some drawbacks. First, reporting cost and technical information raises privacy concerns for market participants. Also, as systems grow in size, the centralized optimization problem can become prohibitively large. This is exacerbated by the recent surge in distributed generation and demand side participation \cite{papadaskalopoulos2013decentralized}, increasing the dimensionality and complexity of the problem further and potentially making centralized mechanisms computationally intractable.

In hopes of avoiding these drawbacks, we introduce a decentralized market mechanism which achieves the economically optimal outcome, honoring the informational asymmetries of the problem and considering important nonlinearities of the system (such as power losses and limits on transmission lines). The electricity market model consists of multiple market participants, DistCos, GenCos, and TransCos, and a single market operator. Our model allows for the consumption centers of each DistCo and the production centers of each GenCo to be distributed across the network. For example, a given GenCo could own generators at multiple buses in the network (a portfolio of plants). Additionally, our model allows for the ownership of transmission lines in the system to be partitioned among multiple TransCos. The market operator is responsible for obtaining a market clearing outcome. The process of achieving this market clearing outcome, termed a \emph{decentralized market mechanism}, is based on principles from Lagrangian duality theory, specifically making use of the \emph{dual decomposition} method \cite{bertsekas1999nonlinear}. The mechanism, which we refer to as the \emph{pricing process}, consists of an iterative price response and price update procedure. All market participants are assumed to act in a self-optimizing manner, that is, given the current LMPs they adjust their decision variables in order to maximize their financial surplus subject to their own local physical and operational constraints. This allows, for instance, for DistCos to exercise flexible demand participation for the elastic component of their total demand and for GenCos to self-dispatch. DistCos and GenCos optimize independently, reporting their surplus-maximizing consumption and production profiles, respectively. TransCos partake in a cooperative message exchange process to reach an operating point that induces power flows that maximize their surpluses for transmitting power along their respective lines. The optimizers are sent to the market operator who is responsible for updating the LMPs in such a way that the self-interested behavior of market participants leads to an outcome that is physically feasible. This outcome, when coupled with the associated set of LMPs, forms a \emph{competitive equilibrium} \cite{mas1995microeconomic,conejo2002}, which we show is Pareto efficient. Under relatively weak conditions (a convex DC approximation and edge-wise positive sums of LMPs), the market participants' optimization problems are convex and the pricing process converges. The pricing process avoids the need for market participants to reveal sensitive information, and additionally, the mechanism scales much more effectively than its centralized counterpart. 

The body of research concerning deregulated electricity markets is vast. We focus on papers most similar to ours, primarily including works that develop decentralized market mechanisms (under perfect competition) using Lagrangian duality techniques. Duality theory allows one to solve the computationally simpler dual problem; however, this can result in a non-zero \emph{duality gap} in general. The authors of \cite{conejo2002,conejo2001} construct a market model consisting of GenCos, DistCos, and a single TransCo, considering a fully nonlinear AC power flow model. Their decentralized mechanism, based on a dual approach, is conjectured to converge to a zero duality gap solution under \emph{profit optimality} and a \emph{convexifying market rule} (a restriction of market participants' behavior). Lavaei and Sojoudi \cite{lavaei2012competitive} consider a competitive energy market setting with GenCos, DistCos, and an ISO under the AC power flow model (using an SDP reformulation). Assuming positive LMPs, the authors are able to show convergence to a zero duality gap solution under the assumption of either: a radial network, or, in the case of a mesh network, the existence of a \emph{phase-shifter} for each network cycle. In the absence of phase-shifters, a zero duality gap can be ensured if loads are allowed to be \emph{over-satisfied} (discarding extra power).  Similar mechanisms have been applied in the context of the unit commitment problem (e.g. \cite{zhuang1988towards}, \cite{bard1988short}, \cite{ongsakul2004unit}) and demand response exchange markets (see \cite{nguyen2012walrasian} and \cite{papadaskalopoulos2013decentralized}).

The contributions of this paper are twofold: {\bf 1)} \emph{Modeling generality}: Our model allows for the ownership of power system assets to be partitioned among the market participants. This allows for each DistCo and GenCo to own multiple units that are distributed across the network (existing literature assumes that each participant owns a single unit \cite{conejo2002,conejo2001,lavaei2012competitive}). Our model also allows for ownership of lines to be partitioned among multiple TransCos (\cite{conejo2002,conejo2001} consider a single TransCo); {\bf 2)} \emph{Convergence to a zero duality gap solution}: Existing models contain nonlinearities that either preclude convergence guarantees (\cite{conejo2002,conejo2001}) or require strong sufficient conditions \cite{lavaei2012competitive}. Our convex power flow approximation allows for power losses to be well-approximated while (under natural conditions) ensuring convergence.

\section{ENERGY MARKET MODEL}
\label{sec:model}

We consider a network of $N$ buses, denoted by the set $\N$, connected by transmission lines, denoted by the undirected edge-set $\E$. Each edge, $\{n,m\}\in\E$, has a line limit $K_{nm}=K_{mn}>0$ and an admittance $Y_{nm} = G_{nm} + \im B_{nm}$ which consists of a conductance $G_{nm}=G_{mn}>0$ and a susceptance $B_{nm}=B_{mn}>0$. We set $K_{nm} = 0$ and $Y_{nm} = 0+\im0$ for $\{n,m\}\not\in\E$. Each bus has an associated voltage angle, denoted by $\theta_n$, with the vector of all angles (termed the operating point) denoted by $\btheta = \{\theta_n\}_{n\in\N}$.

In addition to the market operator ($\agent$), the market model in our paper contains three types of agents (market participants): DistCos, denoted by the set $\D = \{1,\ldots,D\}$; GenCos, denoted by $\G=\{1,\ldots,G\}$; and TransCos, denoted by $\T = \{1,\ldots,T\}$. Each DistCo $i\in\D$ owns \emph{consumption units}, consisting of elastic loads at buses $\N_{\D^e}^i\subseteq\N$ and inelastic loads at buses $\N_{\D^s}^i\subseteq\N$. The elastic and inelastic load profiles of DistCo $i\in\D$ are $\eb^i = \big\{e_n^i\big\}_{n\in\N_{\D^e}^i}$ and $\s^i = \left\{s_n^i\right\}_{n\in\N_{\D^s}^i}$, respectively, where $e_n^i\in[\el_n^i,\eu_n^i]$ is the elastic demand and $s_n^i\ge0$ is the (given) inelastic demand of DistCo $i$'s consumption unit at bus $n$. Each GenCo $i\in\G$ owns \emph{generation units} at buses $\N_\G^i\subseteq\N$. The real power injection profile of GenCo $i\in\G$ is $\pb^i = \big\{p_n^i\big\}_{n\in\N_\G^i}$ where $p_n^i\in[\pl_n^i,\pu_n^i]$ is the injection of GenCo $i$'s generation unit at bus $n$. 
Lastly, each TransCo $i\in\T$ owns a set of transmission lines $\E^i$ with ownership of lines in the system partitioned among TransCos, that is, $\E^1\cup\cdots\cup\,\E^T = \E$ and $\E^i\cap\E^j=\varnothing$, $i\neq j$. Each edge-set $\E^i$ has an associated set of buses $\N_\T^i$ defined as the endpoints of the edges in $\E^i$. The associated voltage angle profile of TransCo $i\in\T$ is $\btheta^i = \big\{\theta_n\big\}_{n\in\N_\T^i}$. A sample network can be seen in Fig. \ref{fig:grid}. 
\begin{figure}[htp]
\vspace{-0.1em}
\begin{center}
\includegraphics[width=0.8\columnwidth]{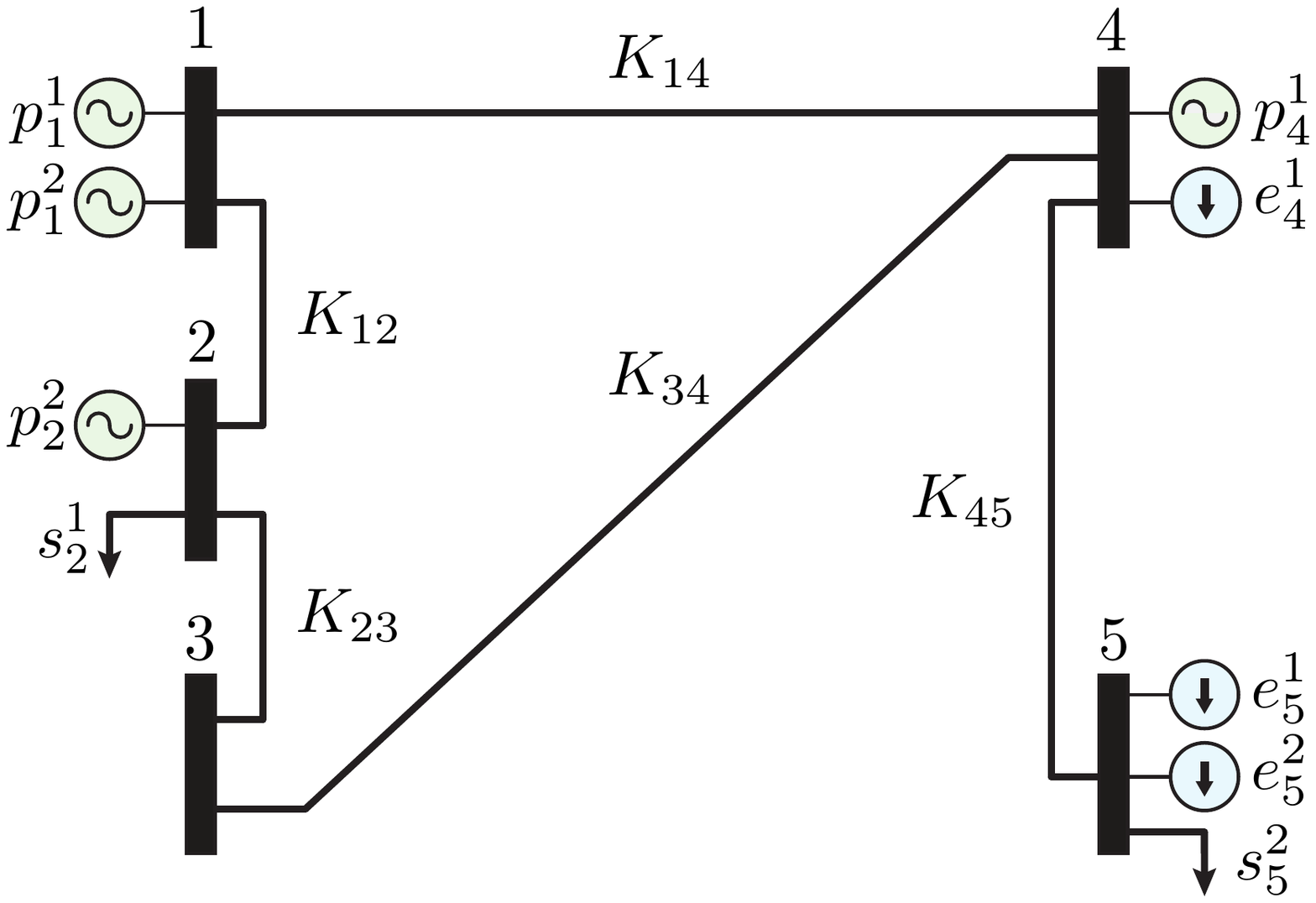}
\caption{\emph{A sample 5-bus network.} GenCo $i=1$ owns generator units at buses $\N_\G^1=\{1,4\}$ corresponding to an injection vector $\pb^1=(p_1^1,p_4^1)$. GenCo $i=2$ has generator units at buses $\N_\G^2=\{1,2\}$, $\pb^2=(p_1^2,p_2^2)$; DistCo $i=1$ has elastic loads at buses $4$ and $5$, $\eb^1 = (e^1_4,e^1_5)$, and an inelastic load at bus $2$, $\s^1=s_2^1$, thus $\N_{\D^e}^1 = \{4,5\}$, $\N_{\D^s}^1 = \{2\}$; and lastly, DistCo $i=2$ has both an elastic and inelastic load at bus $\N_{\D^e}^2 = \N_{\D^s}^2 = \{5\}$, thus $\eb^2 = e^2_5$, $\s^2 = s^2_5$. Bus $3$ is a zero-injection bus. TransCo $i=1$ owns lines $\E^1=\big\{\{1,2\},\{1,4\},\{2,3\}\big\}$ thus $\N_\T^1 = \{1,2,3,4\}$ and TransCo $i=2$ owns lines $\E^2=\big\{\{3,4\},\{4,5\}\big\}$ so $\N_\T^2 = \{3,4,5\}$.}\label{fig:grid}
\end{center}
\vspace{-0.3em}
\end{figure}

For later convenience, we also define $\N_\T^{i,j} := \N_\T^i\cap\N_\T^j$ as the set of shared buses between two TransCos' edge-sets $\E^i$ and $\E^j$ and $\T_n := \{i\in\T\,|\,n\in\N_\T^i\}$ as the set of TransCos that own lines that are connected to bus $n$. 

The load and generation profiles of DistCos and GenCos have associated utilities and costs, respectively. For an elastic load profile $\eb^i$ the aggregate utility (benefit) function of DistCo $i$ is defined as $\ub^i\big(\eb^i\big) := \sum_{n\in\N_{\D^e}^i}u_n^i\big(e_n^i\big)$, where $u_n^i(e_n^i)$ is the benefit associated with elastic demand level $e_n^i$. Similarly, GenCo $i$'s aggregate cost function (total generation cost) is $\cb^i\big(\pb^i\big) := \sum_{n\in\N_\G^i}c_n^i\big(p_n^i\big)$ where $c_n^i(p_n^i)$ represents the cost for producing real power $p_n^i$.

\subsection{Knowledge model}
\label{ssec:know}

We now describe the \emph{knowledge model}, that is, what each of the power system entities knows about the system. Each DistCo $i\in\D$ possesses private information regarding their utility functions $\{u_n^i\}_{n\in\N_{\D^e}^i}$ and any bounds on the elastic load level $\ebl^i=\{\el_n^i\}_{n\in\N_{\D^e}^i},\ebu^i=\{\eu_n^i\}_{n\in\N_{\D^e}^i}$. Each GenCo $i\in\G$ possesses private information regarding their cost functions $\{c_n^i\}_{n\in\N_\G^i}$ and production bounds $\pbl^i=\{\pl_n^i\}_{n\in\N_\G^i},\pbu^i=\{\pu_n^i\}_{n\in\N_\G^i}$. Each TransCo $i\in\T$ knows the connectivity of their region of the network, $(\N_\T^i,\E^i)$, as well as the admittances of the corresponding lines, $Y_{nm}$ for $\{n,m\}\in\E^i$. TransCos also possess private information of the line limits of their transmission lines, $K_{nm}$, $\{n,m\}\in\E^i$. Each DistCo $i\in\D$ knows the inelastic demands at its buses, $\{s_n^i\}_{n\in\N_\D^i}$, whereas the $\agent$ is assumed to know all inelastic demand levels. Furthermore, the $\agent$ knows the location of all DistCo and GenCo units, the network connectivity, and the admittances of all transmission lines in the network.

\subsection{Model Assumptions}
\label{ssec:ass}

We make five core assumptions regarding our model:

\vspace{0.5em}
\noindent  \emph{{\bf Assumption 1} (convex DC approximation)}: We propose a power flow approximation which represents power flow as a convex function of the angle difference. To begin the derivation, recall that by the AC power flow equations \cite{elgerd1973} the real power flowing from bus $n$ to bus $m$ is $P_{nm} = G_{nm}V_n^2 - G_{nm}V_nV_m\cos(\theta_n \!- \theta_m) + B_{nm}V_nV_m\sin(\theta_n \!- \theta_m)$. 
We set voltage magnitudes to 1 p.u., $V_n=1$ $\forall n\in\N$, and assume that voltage angle differences, $\theta_n-\theta_m$, are small (similar to the DC approximation). Using the second-order small angle approximations, $\sin(\theta_n-\theta_m)\approx \theta_n-\theta_m$ and $\cos(\theta_n-\theta_m) \approx 1-\frac{1}{2}(\theta_n-\theta_m)^2$, we can write the expression for the power flow from bus $n$ to bus $m$ as a convex function of the angle difference, $\theta_n-\theta_m$. The resulting approximation, which we term the \emph{convex DC approximation}, dictates that the flow of power on line $(n,m)$ is
\begin{align}\label{eq:flow}
	g(\theta_{nm}) := B_{nm}(\theta_n-\theta_m)+\frac{1}{2}G_{nm}(\theta_n-\theta_m)^2.
\end{align}
The above approximation maintains the asymmetry of the power flow equations, $g(\theta_{nm})\neq -g(\theta_{mn})$, and consequently allows for power losses to be considered (unlike with the DC power flow approximation). The real power losses along line $\{n,m\}$, $L_{nm} = P_{nm} + P_{mn}$, are approximated by $L_{nm} \approx G_{nm}(\theta_n-\theta_m)^2$. For notational convenience, we split Eq. (\ref{eq:flow}) into a DC component, $\gbf(\theta_{nm}) := B_{nm}(\theta_n-\theta_m)$, and a (convex) loss component, $\gtf(\theta_{nm}) := \frac{1}{2}G_{nm}(\theta_n - \theta_m)^2$.

\vspace{0.5em}
\noindent \emph{{\bf Assumption 2} (slack buses)}: Denote the set of slack buses by $\Ss$. We require that each TransCo has exactly one slack bus, that is, $\N_\T^i\cap\Ss$ contains one element for all $i\in\T$. Slack buses serve solely as angle references, that is, $\theta_n = 0$ for all $n\in\Ss$.

\vspace{0.5em}
\noindent \emph{{\bf Assumption 3} (strong convexity)}: We require that all DistCo utility functions $u_n^i$ are strongly concave and all GenCo cost functions $c_n^i$ are strongly convex (this condition is equivalent to strict convexity if the functions are quadratic). See \cite{stott1987} for justification of the convexity assumption.

\vspace{0.5em}
\noindent \emph{{\bf Assumption 4} (positive edge-wise sums of prices)}: We require that all edge-wise sums of locational marginal prices are positive. That is, $\lambda_n+\lambda_m>0$ for all $\{n,m\}\in\E$.\footnote{Note: We are not enforcing this as a constraint in our problem, rather we are only considering topologies where this assumption is naturally satisfied.} Note that this allows $\lambda_n<0$ for some $n$.

\vspace{0.5em}
\noindent \emph{{\bf Assumption 5} (price-taking behavior)}: We assume that the agents (market participants) are price-taking, that is, they assume that the price will remain unchanged if they change their response. This assumption requires that no single agent is large enough (apart from the $\agent$) to influence the price; an assumption which is reasonable in our market model as the number of agents in the system increases.

\section{MAXIMIZING SOCIAL WELFARE}
\label{sec:social}

We are interested in determining the set of variables, consisting of DistCo elastic demand levels $\{\eb^i\}_{i\in\D}$, GenCo real power injection levels $\{\pb^i\}_{i\in\G}$, and an operating point $\btheta$, such that the \emph{social welfare} is maximized, subject to physical and operational constraints. It is known from microeconomic theory that maximizing the social welfare results in a Pareto efficient outcome \cite{mas1995microeconomic}. The single time-period problem can be formally stated as Problem (\ref{prob:central}) below.
\begin{align}
	&\hspace{-2.2em}\max_{\x = (\{\eb^i\}_{i\in\D},\{\pb^i\}_{i\in\G},\btheta)} \, J(\x):=\sum_{i\in\D}\ub^i\big(\eb^i\big)-\sum_{i\in\G}\cb^i\big(\pb^i\big)\label{prob:central}\tag{P}\\
	 \text{s.t. } &\hspace{0.1em} \pb - (\eb + \s) = \f\big(\btheta\big) \label{pc1}\tag{P.i}\\
	 &\hspace{0.1em}\pbl^i \le \pb^i \le \pbu^i,i\in\G\label{pc2}\tag{P.ii}\\
	 &\hspace{0.1em}\ebl^i\le\eb^i\le\ebu^i,i\in\D\label{pc3}\tag{P.iii}\\
	  &\hspace{0.1em}g\big(\theta_{nm}\big) \le K_{nm},g\big(\theta_{mn}\big) \le K_{mn},\{n,m\}\in\E\label{pc4}\tag{P.iv}\\
	 &\hspace{0.1em}\thetal_{nm}\le\theta_{nm}\le\thetau_{nm},\{n,m\}\in\E\label{pc5}\tag{P.v}\\
 	 &\hspace{0.1em}\theta_n=0,n\in\Ss\label{pc6}\tag{P.vi}\\
 	 &\hspace{0.1em}\theta_n\in[-\pi,\pi],n\in\N\label{pc7}\tag{P.vii}
\end{align}
The objective function of Problem (\ref{prob:central}), $J(\x)$, represents the \emph{social welfare} and is defined as the total utility to DistCos minus the total cost to GenCos.

The constraints of problem (\ref{prob:central}) arise from both physical laws and the operational requirements of the power system and the agents. The first constraint (\ref{pc1}), termed the \emph{power balance equation}, takes the form
\begin{align}\label{eq:pb}
	\pb - (\eb + \s) = \f\big(\btheta\big)
\end{align}
where $\pb = (p_1,p_2,\ldots,p_N)$, with $p_n = \sum_{i\in\G}p_n^i$, is the net generation vector and $(\eb+\s)$ is the net demand vector consisting of two components, the elastic demand vector $\eb = (e_1,e_2,\ldots,e_N)$ and the (fixed) inelastic demand vector $\s=(s_1,s_2,\ldots,s_N)$ (with $e_n = \sum_{i\in\D}e_n^i$ and $s_n=\sum_{i\in\D}s_n^i$). The vector $\f\big(\btheta\big) = \big(f_1\big(\btheta\big),f_2\big(\btheta\big),\ldots,f_N\big(\btheta\big)\big)$ denotes the power injections induced by the operating point $\btheta$, where the injection at bus $n$ is defined by the convex function $f_n\big(\btheta\big) = \sum_{m\in\N}g\big(\theta_{nm}\big)$, where $g(\theta_{nm})$ represents the power flow from bus $n$ to $m$ defined in Eq. (\ref{eq:flow}); notice that $g(\theta_{nm})$ is zero if $\{n,m\}\not\in\E$. Constraint (\ref{pc1}) simply states that the injections due to physical laws, $\f(\btheta)$, must agree with the net generation and demand at every bus. Constraints (\ref{pc2}) and (\ref{pc3}) reflect the fact that GenCos/DistCos have bounds on the amount of power they are able to produce/consume. Transmission constraints on the amount of power flowing on each line, constraint (\ref{pc4}), stability constraints on the voltage angle difference, (\ref{pc5}), and slack references, (\ref{pc6}), are also imposed. The last constraint, (\ref{pc7}), is a technical condition that ensures that the voltage angles are well-defined. We group constraints (\ref{pc2})-(vii) into a set denoted by $\X$. It is clear that $\X$ is convex since it is the intersection of half-spaces and convex inequality constraints. Lastly, we assume that Problem (\ref{prob:central}) is feasible.

There are some fundamental difficulties in obtaining a solution to Problem (\ref{prob:central}). First, the problem is nonconvex due the presence of the nonlinear power balance equation. Furthermore, by the discussion in Section \ref{ssec:know}, no single entity in the system has the information required to obtain a solution to Problem (\ref{prob:central}). The remainder of the paper will focus on obtaining a solution to Problem (\ref{prob:central}). 

\section{SURPLUSES \& COMPETITIVE EQUILIBRIA}
\label{sec:surplus}

The notion of a \emph{competitive equilibrium} will be of central importance in obtaining a solution to Problem (\ref{prob:central}). Before we formally define a competitive equilibrium in the context of our problem, we need to discuss some aspects related to the Lagrangian dual function of Problem (\ref{prob:central}).

A partial Lagrangian of Problem (\ref{prob:central}) is formed by dualizing the power balance equation through the vector of dual variables, $\blmp$, where each component $\lmp_n$ represents the locational marginal price of power at bus $n$. Denoting the vector of variables by $\x = (\{\eb^i\}_{i\in\D},\{\pb^i\}_{i\in\G},\btheta)$, and defining $\h(\x):=\f\big(\btheta\big) - \pb +\eb + \s$, the Lagrangian is 
\begin{align}
	\nonumber\!\! \Lb\big(\x,\blmp\big) &:= J(\x) - \blmp^\top\h(\x)\\ 
	\nonumber&\hspace{-2em}=\!\sum_{i\in\D}\!\ub^i\big(\eb^i\big)\!-\!\sum_{i\in\G}\!\cb^i\big(\pb^i\big)\!-\blmp^\top\!\!\left(\f\big(\btheta\big) - \pb +\eb + \s\right)\\
	\nonumber &\hspace{-2em}=\sum_{i\in\D}\sum_{n\in\N_{\D^e}^i}u_n^i\big(e_n^i\big) - \sum_{i\in\G}\sum_{n\in\N_\G^i}c_n^i\big(p_n^i\big)\\
	\nonumber &\hspace{-1em}-\sum_{n\in\N}\lmp_n\left(f_n(\btheta) - \sum_{i\in\G}p_n^i + \sum_{i\in\D}\left(e_n^i + s_n^i\right)\right)\\
	\nonumber &\hspace{-2em}=\sum_{i\in\D}\left(\sum_{n\in\N_{\D^e}^i}\!\!\left[u_n^i\big(e_n^i\big)-\lmp_ne_n^i\right] - \!\!\sum_{n\in\N_{\D^s}^i}\!\!\!\lambda_ns_n^i\right)\\
		&\hspace{-2em}+\sum_{i\in\G}\left(\sum_{n\in\N_\G^i}\left[\lambda_np_n^i - c_n^i\big(p_n^i\big)\right]\right)-\sum_{n\in\N}\lmp_nf_n(\btheta).\label{eq:lagrangian}
\end{align}
Due to the structure of the Lagrangian, Eq. (\ref{eq:lagrangian}), the evaluation the dual function, defined as $\phi(\blmp) = \max_{\x\in\X}\big\{\Lb\big(\x,\blmp\big)\big\}$, is greatly simplified via separable optimizations.
\begin{align}
	\nonumber \phi(\blmp) 	&= \max_{\x\in\X}\big\{\Lb\big(\x,\blmp\big)\big\}\\
			\nonumber &= \sum_{i\in\D}\max_{\eb^i\in\Eb^i}\bigg\{\sum_{n\in\N_{\D^e}^i}\!\!\left[u_n^i\big(e_n^i\big)-\lmp_ne_n^i\right] - \!\!\sum_{n\in\N_{\D^s}^i}\!\!\!\lambda_ns_n^i\bigg\}\\
			\nonumber&\hspace{1.15em}+\sum_{i\in\G}\max_{\pb^i\in\Pb^i}\bigg\{\sum_{n\in\N_\G^i}\left[\lambda_np_n^i - c_n^i\big(p_n^i\big)\right]\bigg\}\\
			&\hspace{1.15em}+ \max_{\btheta\in\Btheta}\bigg\{- \sum_{n\in\N}\lmp_nf_n(\btheta)\bigg\}\label{eq:dualfcn}
\end{align}
where the constraint sets are $\Eb^i = \{\eb^i|\ebl^i\le\eb^i\le\ebu^i\}$, $\Pb^i = \{\pb^i|\pbl^i\le\pb^i\le\pbu^i\}$, and 
$\Btheta = \{(\btheta^1,\ldots,\btheta^T)\in\Btheta^1\times\cdots\times\Btheta^T: \theta_n^i=\theta_n^j,n\in\N_\T^{i,j},i,j\in\T\}$ with each TransCo's feasible set defined as
\begin{align*}
	\Btheta^i := \big\{\btheta^i\big|& g\big(\theta_{nm}\big) \le K_{nm},g\big(\theta_{mn}\big) \le K_{mn},\, \{n,m\}\in\E^i;\\
	 \nonumber \quad& \thetal_{nm}\le\theta_{nm}\le\thetau_{nm},\{n,m\}\in\E^i;\\
	 \nonumber \quad& \theta_n=0,n\in\N_\T^i\cap\Ss;\theta_n\in[-\pi,\pi],n\in\N_\T^i\big\}.
\end{align*}
For later reference, the dual problem of Problem (\ref{prob:central}) is simply
\begin{align}\label{prob:dual}
	\min_{\blmp}\phi(\blmp).\tag{D}
\end{align}

\subsection{Agent Surplus Functions}

The arguments of the maximizations in Eq. (\ref{eq:dualfcn}) represent surplus functions of the agents. This follows from the fact that the dual variables, $\blmp$, of the power balance equation represent locational marginal prices. 
The surplus for DistCo $i\in\D$ for a given demand profile $(\eb^i,\s^i)$ at price $\blmp$ is equal to the utility obtained from $\eb^i$ minus the cost of total demand (sum of elastic and inelastic demand), defined as
\begin{align*}
	\ipsi_\D^i(\eb^i,\blmp) := \sum_{n\in\N_{\D^e}^i}\!\!\left[u_n^i\big(e_n^i\big)-\lmp_ne_n^i\right] - \!\!\sum_{n\in\N_{\D^s}^i}\!\!\!\lambda_ns_n^i.
\end{align*}
The surplus of each GenCo $i\in\G$ is equal to the payment it receives for producing power minus the generation cost,
\begin{align*}
	\ipsi_\G^i(\pb^i,\blmp) := \sum_{n\in\N_\G^i}\left[\lambda_np_n^i - c_n^i\big(p_n^i\big)\right].
\end{align*}
TransCos receive a surplus for facilitating power flow across the network. Congestion and losses in transmission lines creates different valuations for power across the network (represented by LMPs) and results in a discrepancy between the payments received from DistCos and the payments made to GenCos. This creates a surplus (possibly negative) for transmitting power from GenCos to DistCos, termed the \emph{merchandizing surplus}. Under the convex DC approximation (assumption 1), the total merchandizing surplus (argument of the last maximization term in Eq. (\ref{eq:dualfcn})) can be shown to be
\begin{align}
	\nonumber \hspace{-0.78em}\ipsi_\T(\btheta,\blmp) &= - \sum_{n\in\N}\lmp_nf_n(\btheta)= -\!\!\!\!\sum_{(n,m)\in\Ed}\lmp_ng(\theta_{nm})\\
	&\hspace{-3.75em}= \frac{1}{2}\!\!\sum_{(n,m)\in\Ed}\!\!\!\big((\lambda_m-\lambda_n)\bar g(\theta_{nm}) - (\lambda_n + \lambda_m)\tilde g(\theta_{nm})\big)\!\!\!\label{eq:totalms}
\end{align}
where $\Ed$ is the directed edge-set and is defined as the set that contains the pair $(n,m)$ and $(m,n)$ for every edge $\{n,m\}\in\E$. The quantity $\big(\lambda_m-\lambda_n\big)\bar g\big(\theta_{nm}\big) - \big(\lambda_n + \lambda_m\big)\tilde g\big(\theta_{nm}\big)$ is the merchandizing surplus for enabling flow between buses $n$ and $m$ at the price vector $\blmp$. Notice that the first term, $\big(\lambda_m-\lambda_n\big)\bar g\big(\theta_{nm}\big)$, is the familiar expression for the merchandizing surplus under the DC approximation \cite{wu1996folk}. The second term, ${-\big(\lambda_n + \lambda_m\big)\tilde g\big(\theta_{nm}\big)}$, arises from the fact that we are considering losses in our model (see assump. 1).

Since the ownership of transmission lines is partitioned among TransCos we can separate the total merchandizing surplus into each TransCo's merchandizing surplus as 
\begin{align}
	\nonumber \ipsi_\T^i(\btheta^i,\blmp) &= \frac{1}{2}\sum_{(n,m)\in\Ed^i}\!\!\big((\lambda_m-\lambda_n)\bar g(\theta_{nm})\\
	&\hspace{8em} - (\lambda_n + \lambda_m)\tilde g(\theta_{nm})\big)\label{eq:ms}.
\end{align}
We show that, via a message exchange process described in Section \ref{ssec:transco_opt}, TransCos communicate to obtain the angle profile which maximizes $\ipsi_\T(\btheta,\blmp)$ over $\btheta\in\Btheta$ at price $\blmp$.

\subsection{Competitive Equilibria in Energy Markets}

We can now define the concept of a competitive equilibrium in the context of our energy market model. The definition builds upon the one found in \cite{conejo2002}.

\vspace{0.5em}
\begin{definition}[Competitive Equilibrium]\label{def:ce}
A \emph{competitive equilibrium} is defined as the tuple $(\{\hat \eb^i\}_{i\in\D},\{\hat \pb^i\}_{i\in\G},\hat\btheta,\hat\blmp)$ such that\vspace{0.5em}
\begin{enumerate}[(i)]\setlength\itemsep{0.5em}
\item $\left|\left|f_n(\hat\btheta) - \hat p_n + \hat e_n+s_n\right|\right|< \varepsilon$ for all $n\in\N$, $\varepsilon>0$
\item
\begin{itemize}\setlength\itemsep{0.5em}
\item[] \hspace{-2em}$\hat \eb^i(\hat\blmp)$ maximizes $\ipsi_\D^i(\eb^i,\hat\blmp)$ s.t. $\eb^i\in\Eb^i$, $\forall \,i\in\D$
\item[] \hspace{-2em}$\hat \pb^i(\hat\blmp)$ maximizes $\ipsi_\G^i(\pb^i,\hat\blmp)$ s.t. $\pb^i\in\Pb^i$, $\forall \,i\in\G$
\item[] \hspace{-2em}$\hat \btheta^i(\hat\blmp)$ maximizes $\ipsi_\T^i(\btheta^i,\hat\blmp)$ s.t. $\btheta^i\in\Btheta^i$, $\forall \,i\in\T$
\end{itemize}
\end{enumerate}
\end{definition}
\vspace{0.5em}
The above definition states that a competitive equilibrium must not only satisfy the power balance equation (condition (i)) but also result in maximum surplus for all DistCos, GenCos, and TransCos (condition (ii)).

\section{SOLUTION METHODOLOGY}
\label{sec:solution}

Throughout the remainder of the paper we describe a procedure in which the $\agent$ and agents interact in order to obtain a \emph{globally optimal solution} $\x^*$ to the nonconvex primal problem (\ref{prob:central}). The procedure is based on the dual decomposition method; an iterative method that first involves the evaluation of the dual function for a given set of dual variables (prices), followed by an update of the dual variables. 

In the context of the electricity market model in this paper, the evaluation of the dual function is performed in a distributed fashion by the agents. In fact, maximization of surpluses by the agents corresponds exactly to the evaluation of the dual function. DistCos and GenCos maximize in parallel to obtain the optimal profiles for the current price $\blmp^t$, denoted by $\{\eb^i(\blmp^t)\}_{i\in\D}$ and $\{\pb^i(\blmp^t)\}_{i\in\G}$, respectively. TransCos partake in a message exchange process (due to coupling of merchandizing surplus functions) in order to obtain the operating point which maximizes the total merchandizing surplus at the current price, denoted by $\btheta(\blmp^t)$. The $\agent$ uses these maximizers to update the price in such a way as to enforce feasibility (condition (i) of Def. \ref{def:ce}). A block diagram outlining the method can be seen in Fig. \ref{fig:block}. 

\begin{figure}[h!]
\begin{center}
\includegraphics[width=0.95\columnwidth]{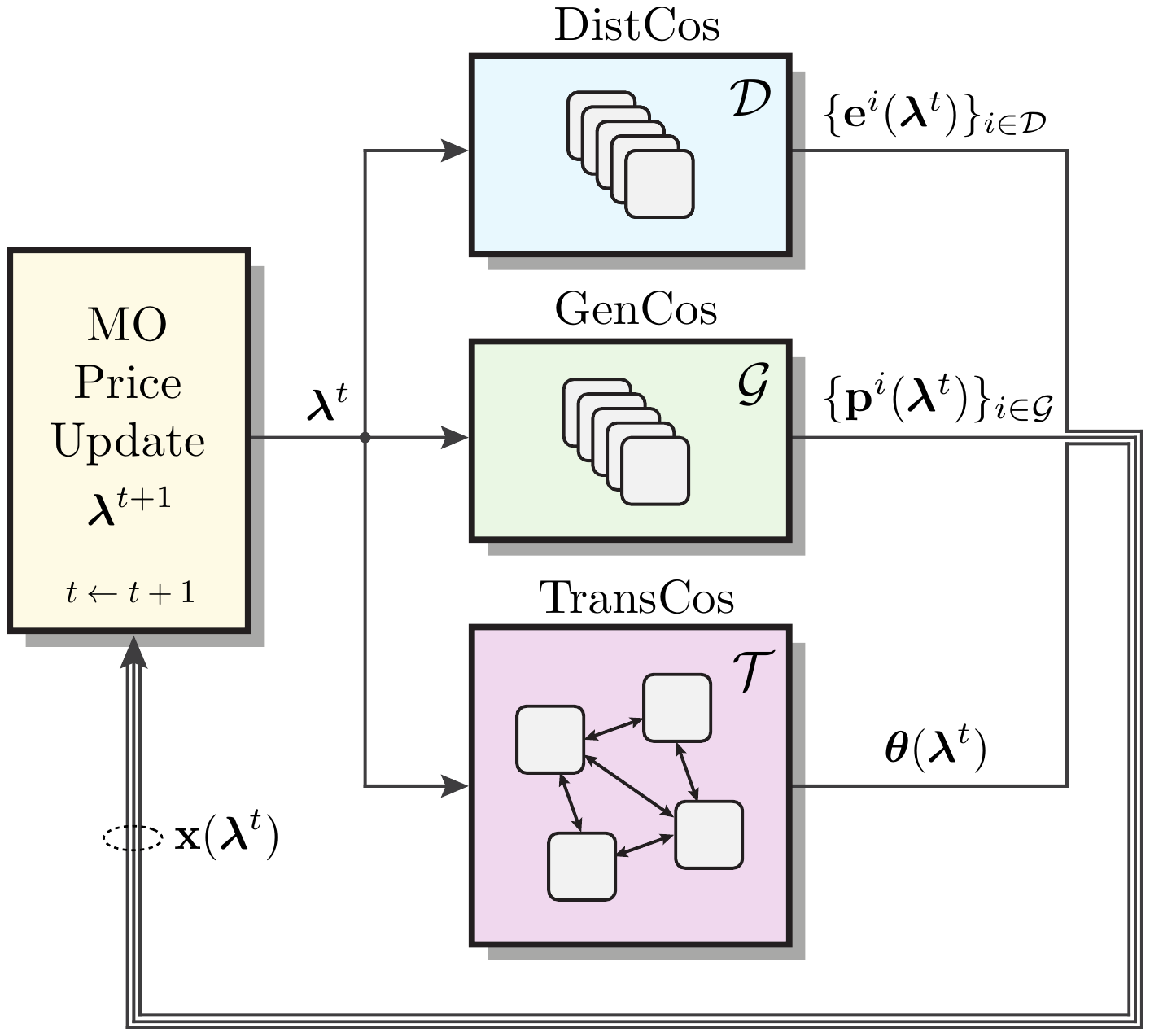}
\caption{\emph{The pricing process:} Given the current price vector $\blmp^t$, DistCo's and GenCo's update the respective components of the consumption profiles $\{\eb^i(\blmp^t)\}_{i\in\D}$ and generation profile $\{\pb^i(\blmp^t)\}_{i\in\G}$, in parallel. TransCo's participate in a message exchange process to reach an angle profile agreement $\btheta(\blmp^t)$. The $\agent$ then updates the price to $\blmp^{t+1}$ using the responses $\x(\blmp^t) = (\{\eb^i(\blmp^t)\}_{i\in\D},\{\pb^i(\blmp^t)\}_{i\in\G},\btheta(\blmp^t))$ (outlined in Section \ref{ssec:update}).
}\label{fig:block}
\end{center}
\vspace{-1em}
\end{figure}
 
\subsection{Price Response - Dual Function Evaluation}
\label{ssec:response}

The first step of the iterative process involves evaluation of the dual function for a given current price vector $\blmp^t$. This is achieved via the following agent surplus maximizations.

\vspace{0.75em}
\subsubsection{DistCo Optimizations} Each DistCo, $i\in\D$, wishes to specify the elastic demand level $\eb^i$ in order to maximize its surplus from buying power (both elastic and inelastic) at the current price $\blmp^t$. Each DistCo $i\in\D$ solves
\begin{align}
	\eb^i(\blmp^t)=\argmax_{\eb^i\in \Eb^i} \ipsi_\D^i(\eb^i,\blmp^t)\label{prob:distco}\tag{$P_\D^i$}
\end{align}
By assumption 3, each $u_n^i$ is strictly concave and therefore the maximizer $\eb^i(\blmp^t)$ of Problem (\ref{prob:distco}) is unique for each $i$.

\vspace{0.75em}
\subsubsection{GenCo Optimizations} Each GenCo, $i\in\G$, wishes to specify the injection levels $\pb^i$ in order to maximize its surplus from selling power at $\blmp^t$. Each GenCo $i\in\G$ solves
\begin{align}
	\pb^i(\blmp^t) = \argmax_{\pb^i\in \Pb^i} \ipsi_\G^i(\pb^i,\blmp^t)\label{prob:genco}\tag{$P_\G^i$}
\end{align}
Again, by assumption 3, the maximizer $\pb^i(\blmp^t)$ of Problem (\ref{prob:genco}) is unique for each $i\in\G$.

\vspace{0.5em}
\subsubsection{TransCo Optimizations}
\label{ssec:transco_opt}
 
Each TransCo, $i\in\T$, aims to specify their voltage angle profile $\btheta^i\in\Btheta^i$ such that the induced flows maximize their merchandizing surplus at the current price, $\ipsi^i_\T(\btheta^i,\blmp^t)$. Doing so is complicated by the fact that there exist buses that are shared between one or more TransCos, that is, $\N_\T^{i,j}\neq\varnothing$ for neighboring $i,j\in\T$. The presence of these shared buses creates coupling between the merchandizing surplus functions of distinct TransCos.

As a result, all neighboring TransCos must negotiate the angle value of their shared buses. Arriving at a system-wide agreement for the shared buses, with each TransCo maximizing their own merchandizing surplus, results in a maximization of the total merchandizing surplus (achieving the value of the last term in Eq. (\ref{eq:dualfcn})). The angle profile agreement is achieved via a message exchange process that is based on the ADMM algorithm \cite{boyd2011distributed} in which neighboring TransCos iteratively exchange the voltage angle values of their shared buses. 

To make use of the ADMM algorithm, it is necessary to write the problem of maximizing the total merchandizing surplus, $\max_{\btheta\in\Btheta}\ipsi_\T(\btheta,\blmp)$, as the equivalent problem
\begin{align}
	\max_{\{\btheta^i\}_{i\in\T},\bz}&\quad \ipsi_\T(\btheta,\blmp^t) = \sum_{i\in\T}\ipsi_\T^i(\btheta^i,\blmp^t)\label{prob:transco}\tag{$P_{\T}$}\\
	 \nonumber \text{subject to } \quad & \btheta^i\in\Btheta^i,i\in\T\\
	 \nonumber & \btheta^i - \bz^i = \ze,i\in\T
\end{align}
where $\bz\in\Rr^N$ is the global variable representing the system-wide angle profile $\btheta$ and $\bz^i = \{z_n\}_{n\in\N_\T^i}$ is the relevant component of $\bz$ corresponding to TransCo $i$'s angle profile. We associate a set of dual variables, $\y^i = \{y_n\}_{n\in\N_\T^i}$, with each of the \emph{consensus constraints} $\btheta^i - \bz^i = \ze$, $i\in\T$. The consensus constraints enforce the angle profiles of TransCos to agree. Defining primal and dual residual norms \cite{boyd2011distributed} as
\begin{align}
r^{(k)} &:= \big(\btheta^{1,{(k)}} - \bz^{1,{(k)}},\ldots,\btheta^{T,{(k)}} - \bz^{T,{(k)}}\big)\label{eq:primalres}\\
s^{(k)} &:= -\rho\big(\bz^{1,{(k)}}-\bz^{1,{(k-1)}},\ldots,\bz^{T,{(k)}}-\bz^{T,{(k-1)}}\big)\label{eq:dualres}
\end{align}
the TransCo message exchange process is given by Alg. 1.
\begin{figure}[!b]
\vspace{-1.5em}
\small
 \removelatexerror
  \begin{algorithm}[H]
   \caption{TransCo Message Exchange Process}
   Initialize $k=0$, $\y^{i,{(0)}}=\ze$ for all $i\in\T$, $\bz^{(0)}=\ze$, $\rho>0$\;
   \While(){$\neg(||r^{(k)}||_2<\varepsilon_{\text{primal}}$ and $||s^{(k)}||_2<\varepsilon_{\text{dual}})$}
   {
       \For( \emph{(parallel optimization and broadcast)}){$i\in\T$}
       {
       	TransCo $i$ solves:
       	\begin{align*}
    		\btheta^{i,{(k+1)}}(\blmp^t) = \argmax_{\btheta^i\in\Btheta^i}\bigg\{&\ipsi_{\T}^i(\btheta^i,\blmp^t)\\ 
    		&\hspace{-8em}-\! \big(\y^{i,{(k)}}\big)^{\!\top}\!(\btheta^i - \bz^{i,{(k)}}) \!-\! \frac{\rho}{2}\left|\left|\btheta^i - \bz^{i,{(k)}}\right|\right|^2\!\bigg\}
    	\end{align*}
          	Broadcast $\{\theta_n^{i,{(k+1)}}\}_{n\in\N_\T^{i,j}}$ to neighboring $j\in\T$;
       }
       \For(\emph{(parallel average and dual variable update)}){$i\in\T$}
       { 	
       		\vspace{0.25em}
                	Average: $z_n^{{(k+1)}} = \displaystyle\frac{1}{|\T_n|}\sum_{j\in\T_n}\theta_n^{j,{(k+1)}}$, $\forall n\in\N_\T^i$\\
		\vspace{0.25em}
       	    	Update: $\y^{i,{(k+1)}} = \y^{i,{(k)}} + \rho(\btheta^{i,{(k+1)}}- \bz^{i,{(k+1)}})$
	}
	Update residuals: set $r^{(k+1)}$, $s^{(k+1)}$ via Eq.'s (\ref{eq:primalres}), (\ref{eq:dualres})\;
   	Update counter: $k\leftarrow k+1$\;
   }
  \end{algorithm}
\end{figure}

By assumption 2, each TransCo $i$ has a slack bus in its set of buses $\N_\T^i$, and the following lemma holds.
\vspace{0.5em}
\begin{lemma}\label{lem:ms}
The merchandizing surplus function $\ipsi_\T^i(\btheta^i,\blmp)$ is strongly concave in $\btheta^i$ for all $i\in\T$.
\end{lemma}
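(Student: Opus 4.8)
The plan is to separate $\ipsi_\T^i$ into a part that is affine in $\btheta^i$ and a part that is quadratic, and then to show that the Hessian of the quadratic part, once the feasible slack-bus constraint is imposed, is negative definite. Recalling Eq.~(\ref{eq:ms}), the DC contribution $(\lambda_m-\lambda_n)\bar g(\theta_{nm}) = (\lambda_m-\lambda_n)B_{nm}(\theta_n-\theta_m)$ is linear in the angles and so does not affect the Hessian; only the loss terms $-(\lambda_n+\lambda_m)\tilde g(\theta_{nm})$ matter. First I would use the symmetries $\tilde g(\theta_{nm})=\tilde g(\theta_{mn})$ and $G_{nm}=G_{mn}$ to collapse the sum over the directed edge-set $\Ed^i$ into a sum over the undirected edges $\E^i$, writing the quadratic part of $\ipsi_\T^i$ as $-\tfrac12\sum_{\{n,m\}\in\E^i}(\lambda_n+\lambda_m)G_{nm}(\theta_n-\theta_m)^2$.

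Next I would identify this quadratic form with a weighted graph Laplacian. Defining edge weights $w_{nm}:=(\lambda_n+\lambda_m)G_{nm}$ on the subgraph $(\N_\T^i,\E^i)$, the Hessian of $\ipsi_\T^i$ with respect to $\btheta^i$ is exactly $-L^i$, where $L^i$ is the weighted Laplacian with off-diagonal entries $-w_{nm}$ and diagonal entries $\sum_{m}w_{nm}$. By Assumption~4 every edge-wise price sum satisfies $\lambda_n+\lambda_m>0$, and by the conductance hypothesis $G_{nm}>0$, so every weight $w_{nm}$ is strictly positive. Hence $L^i$ is a bona fide weighted Laplacian, is positive semidefinite, and has associated energy $\tfrac12\sum_{\{n,m\}\in\E^i}w_{nm}(\theta_n-\theta_m)^2\ge 0$; equivalently, $\ipsi_\T^i$ is at this stage concave.

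The remaining—and most delicate—step is upgrading semidefiniteness to strict definiteness, which is precisely where Assumption~2 enters. The energy of $L^i$ vanishes exactly when $\theta_n$ is constant across each connected component of $(\N_\T^i,\E^i)$, so $L^i$ has a nontrivial nullspace containing the all-ones vector; on the full angle space $\ipsi_\T^i$ is therefore \emph{not} strongly concave. However, the feasible set $\Btheta^i$ pins the unique slack bus $n_0\in\N_\T^i\cap\Ss$ to $\theta_{n_0}=0$. Restricting to this affine subspace removes the all-ones direction, and the resulting ``grounded'' Laplacian obtained by deleting the row and column of $n_0$ is positive definite. Consequently the restricted Hessian $-L^i$ is negative definite, and $\ipsi_\T^i$ is strongly concave in $\btheta^i$ over $\Btheta^i$ with modulus equal to the smallest eigenvalue of the grounded Laplacian. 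I expect the main obstacle to be exactly this last point: positive definiteness of the grounded Laplacian requires that $(\N_\T^i,\E^i)$ be connected, so that a single reference angle annihilates the entire nullspace; were the subgraph to split into several components, one slack bus per TransCo would not suffice. I would therefore invoke (or make explicit) the natural modeling assumption that each TransCo's region $(\N_\T^i,\E^i)$ is a connected subnetwork.
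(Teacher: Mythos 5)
Your proposal is correct, and it reaches the paper's conclusion by a recognizably different final argument. You and the paper agree on the setup: the DC terms $(\lambda_m-\lambda_n)\bar g(\theta_{nm})$ are linear and drop out of the Hessian, the directed sum over $\Ed^i$ collapses (via $\tilde g(\theta_{nm})=\tilde g(\theta_{mn})$, absorbing the factor $\tfrac12$) to the undirected quadratic form $-\tfrac12\sum_{\{n,m\}\in\E^i}(\lambda_n+\lambda_m)G_{nm}(\theta_n-\theta_m)^2$, and the Hessian is the negative of a Laplacian-type matrix with edge weights $(\lambda_n+\lambda_m)G_{nm}>0$ by Assumption~4, grounded at the slack bus of Assumption~2. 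Where you diverge is the definiteness step: the paper never mentions Laplacians or spectra; it observes that the grounded matrix is \emph{irreducibly diagonally dominant} (weak dominance in every row, strict dominance in rows of buses adjacent to the slack bus), cites Theorem~6.2.27 of Horn and Johnson for nonsingularity, and concludes negative definiteness from the negative diagonal via Prop.~2.2.20 of Cottle et al. Your spectral route through the grounded weighted Laplacian buys two things the paper's argument does not make explicit: a quantitative modulus of strong concavity (the smallest eigenvalue of the grounded Laplacian) and, more importantly, transparency about the hidden hypothesis --- positive definiteness of the grounded Laplacian requires every component of $(\N_\T^i,\E^i)$ to touch the slack bus, i.e., connectivity of each TransCo's region. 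You are right to flag this: the paper needs exactly the same hypothesis, since ``irreducibly diagonally dominant'' presupposes irreducibility of the grounded matrix (and the paper's Assumption~2, one slack bus per TransCo, is what makes a single reference suffice), yet connectivity is never stated as a model assumption; the paper's matrix-theoretic phrasing simply smuggles it in, whereas your write-up surfaces it as an explicit modeling condition. Conversely, the paper's diagonal-dominance route is self-contained at the level of matrix entries and avoids any appeal to Laplacian nullspace structure, which keeps the appendix short and lets the identical argument be reused verbatim for the matrix ${\bf M}$ in the proof of Lemma~2.
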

\hspace{1.5em}\emph{Proof:} 
See appendix A.\hfill$\QED$
\vspace{0.5em}

\noindent Lemma \ref{lem:ms} and the convergence result of the ADMM in \cite{boyd2011distributed} (p.17) lead to the following corollary.
\vspace{0.5em}
\begin{corollary}\label{cor:admm}
Alg. 1 generates iterates $\{\bz^{(k)}\}$ that converge to the unique solution $\btheta(\blmp^t)$ of $\max_{\btheta\in\Btheta}\ipsi_\T(\btheta,\blmp^t)$.
\end{corollary}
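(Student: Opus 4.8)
The plan is to verify that Problem~(\ref{prob:transco}) together with Alg.~1 is exactly an instance of the global consensus ADMM treated in \cite{boyd2011distributed}, and then to invoke its convergence theorem after checking its two standing hypotheses, finally upgrading its conclusion to full iterate convergence using the strong concavity established in Lemma~\ref{lem:ms}. First I would recast $\max_{\btheta\in\Btheta}\ipsi_\T(\btheta,\blmp^t)$ as the minimization of $F(\{\btheta^i\}_{i\in\T}) := \sum_{i\in\T}\big(-\ipsi_\T^i(\btheta^i,\blmp^t) + I_{\Btheta^i}(\btheta^i)\big)$ subject to the consensus constraints $\btheta^i-\bz^i=\ze$, where $I_{\Btheta^i}$ is the indicator of the feasible set $\Btheta^i$ and the consensus variable $\bz$ carries no cost. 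I would then confirm that the three steps of Alg.~1 coincide termwise with the $\btheta$-, $\bz$-, and $\y$-updates of consensus ADMM: the local augmented-Lagrangian maximizations are the $\btheta^i$-minimizations of $F$; the per-bus average $z_n^{(k+1)}=\tfrac{1}{|\T_n|}\sum_{j\in\T_n}\theta_n^{j,(k+1)}$ is precisely the closed-form $\bz$-update for consensus (averaging each shared component over the agents owning it); and the dual ascent matches.

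Next I would check the two assumptions under which the ADMM theorem of \cite{boyd2011distributed} holds. For closedness, properness, and convexity: each $-\ipsi_\T^i(\cdot,\blmp^t)$ is a continuous (quadratic) function that is convex by Lemma~\ref{lem:ms}, and each $I_{\Btheta^i}$ is the indicator of a nonempty closed convex set (convexity of $\Btheta^i$ was noted when defining $\X$; nonemptiness follows from feasibility of Problem~(\ref{prob:central})), so both $F$ and the zero cost on $\bz$ are closed, proper, and convex. For the saddle-point assumption: the constraints $\btheta^i-\bz^i=\ze$ are affine, so strong duality holds with no duality gap, and since $F$ is strongly convex on the nonempty closed convex feasible set it attains a unique minimizer; this yields a primal--dual saddle point of the unaugmented Lagrangian. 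These are exactly the hypotheses required for the residual, objective, and dual-variable convergence guarantees.

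The hard part will be that the cited ADMM theorem delivers only $\|r^{(k)}\|_2\to0$ (asymptotic consensus feasibility), convergence of the objective to its optimum $p^*$, and $\y^{(k)}\to\y^*$ --- it does not by itself assert that the primal iterates $\{\bz^{(k)}\}$ converge, which is precisely where Lemma~\ref{lem:ms} must do the extra work. Letting $\btheta^*=\btheta(\blmp^t)$ be the unique optimizer (stacked over $i\in\T$) and $\mu>0$ the strong-convexity modulus of $F$, the first-order optimality condition at $\btheta^*$ together with strong convexity yields $\tfrac{\mu}{2}\big\|\btheta^{(k)}-\btheta^*\big\|^2 \le F(\btheta^{(k)})-p^*$ along the feasible iterates $\btheta^{i,(k)}\in\Btheta^i$ produced by the $\btheta$-update. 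Since the right-hand side vanishes by the objective-convergence guarantee, $\btheta^{(k)}\to\btheta^*$; finally $\|r^{(k)}\|_2\to0$ gives $\btheta^{i,(k)}-\bz^{i,(k)}\to\ze$ for every $i$, so $\bz^{(k)}\to\btheta^*=\btheta(\blmp^t)$, the claimed conclusion. I would close by noting that uniqueness of this limit is inherited from the uniqueness of the minimizer guaranteed by strong convexity.
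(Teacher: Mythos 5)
Your overall route is the paper's own: the paper proves this corollary simply by citing Lemma~\ref{lem:ms} together with the ADMM convergence result of \cite{boyd2011distributed} (p.~17), and you correctly identify the point the paper glosses over --- that result yields only residual convergence, objective convergence, and dual-variable convergence, \emph{not} convergence of the primal iterates. However, the step you supply to close this gap is incorrect as stated. The inequality $\tfrac{\mu}{2}\|\btheta^{(k)}-\btheta^*\|^2\le F(\btheta^{(k)})-p^*$ is the strong-convexity bound for the \emph{consensus-feasible} problem: it follows from first-order optimality of $\btheta^*$ only over points whose shared components agree. The ADMM iterates satisfy $\btheta^{i,(k)}\in\Btheta^i$ but violate consensus at every finite $k$, and over the product set $\Btheta^1\times\cdots\times\Btheta^T$ the minimizer of $F$ is generally \emph{not} $\btheta^*$ (the constrained first-order condition at $\btheta^*$ carries the multiplier term on the consensus constraints, which your inequality drops). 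Concretely, locally feasible but non-consensus points can have $F$ strictly below $p^*$ --- relaxing the consensus constraint can only lower the minimum --- so $F(\btheta^{(k)})$ may approach $p^*$ from below, making your right-hand side negative while the left-hand side is positive; the claimed bound can fail along the iterates.

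The repair is standard and preserves your conclusion: work with the unaugmented Lagrangian $L_0(\btheta,\bz,\y)=F(\btheta)+\sum_{i\in\T}(\y^i)^\top(\btheta^i-\bz^i)$ at the saddle point $(\btheta^*,\bz^*,\y^*)$. Minimality in $\bz$ forces $\sum_{j\in\T_n}y_n^{*j}=0$ at every shared bus $n$, so $L_0(\btheta,\bz,\y^*)=F(\btheta)+\sum_{i\in\T}(\y^{*i})^\top\btheta^i$ is strongly convex in $\btheta$ with minimizer $\btheta^*$, giving $\tfrac{\mu}{2}\|\btheta^{(k)}-\btheta^*\|^2\le F(\btheta^{(k)})+(\y^*)^\top r^{(k)}-p^*$; both correction terms vanish by the objective- and residual-convergence guarantees (with $\y^*$ fixed and $r^{(k)}\to\ze$), so $\btheta^{(k)}\to\btheta^*$ and your final step $\bz^{(k)}\to\btheta(\blmp^t)$ goes through. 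Two smaller points deserve mention: (a) a saddle point does not exist merely because the consensus constraints are affine --- the sets $\Btheta^i$ are not polyhedral (the line limits $g(\theta_{nm})\le K_{nm}$ are convex quadratics), so you need a refined Slater-type condition, e.g., a consensus-feasible angle profile in the relative interior of the product of the $\Btheta^i$; (b) the plain per-bus averaging in Alg.~1 coincides with the general-form consensus $\bz$-update of \cite{boyd2011distributed} only because the initialization $\y^{i,(0)}=\ze$ guarantees $\sum_{j\in\T_n}y_n^{j,(k)}=0$ for all $k$, a fact your matching argument should state explicitly.
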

\vspace{0.5em}

\noindent All of the maximizers for the current price $\blmp^t$ are then broadcast to the $\agent$, as in Fig. \ref{fig:block}, and the price is updated.

\subsection{Price Update}
\label{ssec:update}

The $\agent$ receives all of the maximizers from the agents, $\x(\blmp^t) = (\{\eb^i(\blmp^t)\}_{i\in\D},\{\pb^i(\blmp^t)\}_{i\in\G},\btheta(\blmp^t))$, for the current price $\blmp^t$ and uses them to compute an updated price $\blmp^{t+1}$. The price is updated in such a way as to iteratively enforce the power balance equation (see cond. (i) of Def. \ref{def:ce}). Before defining the price update, we state the following result.
\vspace{0.5em}
\begin{lemma}\label{lem:pd}
The Hessian of the Lagrangian is negative definite, that is, $\nabla^2_{\x\x}\Lb(\x,\blmp)\prec\ze \text{ for all }\x.$
\end{lemma}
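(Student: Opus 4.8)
The plan is to exploit the additive separability of the Lagrangian in Eq. (\ref{eq:lagrangian}) to reduce the claim to three independent diagonal blocks, and then dispatch each block using the model assumptions. First I would fix $\blmp$ and observe that the decision variables enter $\Lb$ in disjoint groups: the elastic-demand terms $u_n^i(e_n^i)-\lmp_n e_n^i$ involve only $\eb$, the generation terms $\lmp_n p_n^i-c_n^i(p_n^i)$ involve only $\pb$, the merchandizing-surplus term $-\sum_{n}\lmp_n f_n(\btheta)$ involves only $\btheta$, and the inelastic term $-\lmp_n s_n^i$ is constant. Hence every mixed second partial between the three groups vanishes, and $\nabla^2_{\x\x}\Lb$ is block diagonal with blocks $H_{\eb},H_{\pb},H_{\btheta}$. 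It therefore suffices to show each block is negative definite.

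The $\eb$- and $\pb$-blocks are immediate. Because each $u_n^i$ depends only on $e_n^i$ and each $c_n^i$ only on $p_n^i$, both $H_{\eb}$ and $H_{\pb}$ are diagonal, with entries $d^2u_n^i/d(e_n^i)^2$ and $-d^2c_n^i/d(p_n^i)^2$. By Assumption 3 the utilities are strongly concave and the costs strongly convex, so every such entry is strictly negative and uniformly bounded away from zero for all $\x$, giving $H_{\eb}\prec\ze$ and $H_{\pb}\prec\ze$.

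The substance lies in the $\btheta$-block. I would compute $H_{\btheta}$ directly from $f_n(\btheta)=\sum_{m}g(\theta_{nm})$, noting that only the loss component $\gtf(\theta_{nm})=\tfrac12 G_{nm}(\theta_n-\theta_m)^2$ carries curvature while the DC component $\gbf$ is linear in $\btheta$. After symmetrizing the double sum over ordered pairs, the block collapses to $H_{\btheta}=-L$, where $L$ is the weighted graph Laplacian of $(\N,\E)$ with edge weights $w_{nm}=(\lmp_n+\lmp_m)G_{nm}$. By Assumption 1 we have $G_{nm}>0$ and by Assumption 4 we have $\lmp_n+\lmp_m>0$ on every edge, so all weights are strictly positive and $L$ is positive semidefinite; thus $H_{\btheta}\preceq\ze$.

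The main obstacle is that a graph Laplacian is only semidefinite — its kernel contains the vectors constant on each connected component — so $-L$ by itself is not strictly negative definite. This is precisely where Assumption 2 (slack buses) enters: since the slack angles are pinned to zero, the relevant quadratic form is restricted to the subspace obtained by deleting the slack coordinates, turning $L$ into a grounded (Dirichlet) Laplacian. A grounded Laplacian is positive definite provided every connected component of the weighted graph contains a grounded node, which holds here because each TransCo region carries exactly one slack bus and that bus is connected to the remainder of its region, so every component meets $\Ss$. Pinning down this grounding/connectivity step is the crux; once $H_{\btheta}\prec\ze$ on the admissible subspace is established, negative definiteness of the full block-diagonal Hessian follows immediately from that of its three blocks.
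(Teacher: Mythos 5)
Your proposal is correct, and it reaches the paper's conclusion by a recognizably different argument on the crucial block. The skeleton is identical to the paper's proof: both exploit the separability of $\Lb$ in $(\eb,\pb,\btheta)$ to get a block-diagonal Hessian, and both dispatch the demand and generation blocks as diagonal matrices with entries $(u_n^i)''<0$ and $-(c_n^i)''<0$ by Assumption 3 (your remark that strong, not merely strict, concavity/convexity gives entries uniformly bounded away from zero for all $\x$ is a point the paper leaves implicit). The divergence is in the $\btheta$-block, which the paper calls ${\bf M}$: the paper treats it, as in its proof of Lemma \ref{lem:ms}, as an \emph{irreducibly diagonally dominant} matrix with negative diagonal obtained by deleting the slack rows and columns, then invokes Theorem 6.2.27 of Horn--Johnson for nonsingularity and Prop.\ 2.2.20 of Cottle et al.\ for definiteness. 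You instead identify the block explicitly as $-L$ for the weighted graph Laplacian with edge weights $w_{nm}=(\lambda_n+\lambda_m)G_{nm}$ (positive by Assumptions 1 and 4 --- the same place these assumptions enter the paper's diagonal-dominance argument), and obtain strict definiteness from the grounded (Dirichlet) Laplacian: the quadratic form $\sum_{\{n,m\}\in\E}w_{nm}(\theta_n-\theta_m)^2$ vanishes only on vectors constant per connected component, and pinning the slack coordinates to zero kills that kernel. The two certificates are mathematically equivalent here, but each buys something: your route is self-contained (no citations to matrix-theory results needed) and, importantly, it surfaces the connectivity requirement --- every connected component must contain a slack bus --- that the paper encodes silently in the word ``irreducibly''; note that neither the paper nor Assumption 2 explicitly states that each TransCo region $(\N_\T^i,\E^i)$ is connected, so your flagged ``crux'' is a genuine implicit hypothesis shared by both proofs rather than a gap peculiar to yours. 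The paper's diagonal-dominance route, in exchange, recycles verbatim the machinery of Lemma \ref{lem:ms} (strict dominance exactly in rows adjacent to a slack bus), which is why its proof of Lemma \ref{lem:pd} can be stated in one line as ``similar to the proof of Lemma \ref{lem:ms}.''
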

\hspace{1.5em}\emph{Proof:} 
See appendix B.\hfill$\QED$
\vspace{0.5em}

\noindent As a consequence of the strong concavity of the Lagrangian, the dual function, Eq. (\ref{eq:dualfcn}), is unique and its derivative exists. The gradient of the dual function is (see Thm. 6.3.3 of \cite{bazaraa2013nonlinear})
\begin{align}
	\nabla_{\blmp} \phi(\blmp) = \h(\x(\blmp))^\top\label{eq:grad}
\end{align}
where $\h(\x)$ was defined at the beginning of Section \ref{sec:surplus}. As a result, solving the power balance equation is equivalent to finding where the gradient of the dual function vanishes. The price update follows a gradient descent method
\begin{align}
	\nonumber \blmp^{t+1} &= \blmp^t - \alpha_t\nabla_{\blmp}\phi(\blmp^t) = \blmp^t - \alpha_t\h(\x(\blmp^t))\\
	&= \blmp^t - \alpha_t\Big(\f\big(\btheta(\blmp^t)\big) - \pb(\blmp^t) + \eb(\blmp^t) + \s\Big)\label{eq:update}
\end{align}
with step-size $\alpha_t$, net generation profile $\pb(\blmp^t) = (p_1(\blmp^t),\ldots,p_N(\blmp^t))$, with $p_n(\blmp^t) = \sum_{i\in\G}p_n^i(\blmp^t)$, and net elastic demand profile $\eb(\blmp^t) = (e_1(\blmp^t),\ldots,e_N(\blmp^t))$, with $e_n(\blmp^t) = \sum_{i\in\D}e_n^i(\blmp^t)$. The injection term $\f\big(\btheta(\blmp^t)\big)$ is computed from the angle profile $\btheta(\blmp^t)$ (from Sec. \ref{ssec:transco_opt}).

\section{ANALYSIS}
\label{sec:analysis}

\subsection{Convergence Properties of the Pricing Process}
\label{ssec:conv}

We begin by showing that the dual function is Lipschitz continuous (this follows from the fact that a function with a bounded derivative is Lipschitz). By Eq. (\ref{eq:grad}), the gradient of the dual function satisfies $\nabla_{\blmp} \phi(\blmp) = \h(\x(\blmp))^\top$. Noting that $\|\h(\x(\blmp))\| = \|\f\big(\btheta(\blmp)\big) - \pb(\blmp) + \eb(\blmp) + \s \|$ and the fact that $\f\big(\btheta(\blmp)\big)$, $\pb(\blmp)$, and $\eb(\blmp)$ are all bounded, there exists some $M<\infty$ such that $\|\nabla_{\blmp} \phi(\blmp)\| = \|\h(\x(\blmp))\| \le M$. Thus the dual function is Lipschitz continuous.

The dual function $\phi(\blmp)$ is a convex function of $\blmp$. It can be shown through standard arguments that, for a sufficiently small step-size, gradient descent applied to a convex function generates iterates satisfying
\begin{align*}
	\phi(\blmp^t) - \phi^* \le \frac{\|\blmp^0 - \blmp^*\|^2 + \sum_{s=0}^t\alpha_s^2\|\nabla\phi(\blmp^s)\|^2}{2\sum_{s=0}^t\alpha_s}
\end{align*}
where $\phi^*$ denotes a minimum of $\phi$. In order to ensure convergence, one must choose $\alpha_t$ such that $\sum_{s=0}^\infty\alpha_s^2<\infty$ and $\sum_{s=0}^\infty\alpha_s=\infty$. Noting that $\|\nabla\phi(\blmp^s)\|\le M$ for all $s$, we have $\phi(\blmp^t)\to\phi^*$. Selecting a step-size of the form $\alpha_t = \beta/t$, $\beta>0$, ensures that the pricing process converges to a minimizer $\blmp^*$ of the dual function $\phi(\blmp)$, solving the dual problem (\ref{prob:dual}).\footnote{Prices must satisfy assumption 4 at each iteration $t$ in order to ensure the TransCo subproblems are convex. This can simply be achieved through choice of a sufficiently positive $\blmp^0$.} 
Since the dual problem is unconstrained, $\nabla\phi(\blmp)|_{\blmp=\blmp^*} = \ze$, and, again by Eq. (\ref{eq:grad}), $\h(\x(\blmp^*))=\ze$.

\subsection{Duality Gap}
\label{ssec:gap}

In this section, we demonstrate that the converged dual solution $\blmp^*$ results in a zero duality gap with Problem (\ref{prob:central}). 
\vspace{0.5em}
\begin{theorem}\label{thm:optimal}
The pricing process described in Section \ref{sec:solution} generates a competitive eq. $(\x^*,\blmp^*)$, where $\x^*=\x(\blmp^*)$ is a globally optimal solution to Problem (\ref{prob:central}).
\end{theorem}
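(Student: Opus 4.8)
The plan is to verify the two assertions in turn: first that $(\x^*,\blmp^*)$ satisfies Definition~\ref{def:ce}, and then that $\x^*$ is globally optimal for the nonconvex Problem~(\ref{prob:central}) via a zero-duality-gap argument. I take as given, from Section~\ref{ssec:conv}, that the pricing process converges to a minimizer $\blmp^*$ of the dual $\phi$, and that stationarity $\nabla_{\blmp}\phi(\blmp^*)=\ze$ together with Eq.~(\ref{eq:grad}) yields $\h(\x(\blmp^*))=\ze$. I set $\x^*=\x(\blmp^*)$, write its components as $\hat\eb^i$, $\hat\pb^i$, $\hat\btheta$, and put $\hat\blmp:=\blmp^*$, matching the notation of Definition~\ref{def:ce}.

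For the competitive-equilibrium claim, condition~(i) is immediate: since $\h(\x^*)=\f(\hat\btheta)-\hat\pb+\hat\eb+\s=\ze$, the power-balance residual vanishes componentwise, so $\|f_n(\hat\btheta)-\hat p_n+\hat e_n+s_n\|=0<\varepsilon$ for every $\varepsilon>0$. Condition~(ii) follows from the separable structure of the dual-function evaluation in Eq.~(\ref{eq:dualfcn}): by construction $\x^*=\x(\blmp^*)$ is exactly the concatenation of the per-agent surplus maximizers at price $\blmp^*$ — the DistCo and GenCo components solve Problems~(\ref{prob:distco}) and~(\ref{prob:genco}), and Corollary~\ref{cor:admm} guarantees that the TransCo message-exchange process returns the $\btheta$-component maximizing the total merchandizing surplus over $\Btheta$, which by the separability of Eq.~(\ref{eq:ms}) coincides with each TransCo maximizing $\ipsi_\T^i(\btheta^i,\blmp^*)$ over $\Btheta^i$.

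It remains to establish global optimality. The key observation is that $\x^*$ is primal feasible: it lies in $\X$ because it is the maximizer of $\Lb(\cdot,\blmp^*)$ over $\X$, and it satisfies the power balance~(\ref{pc1}) because $\h(\x^*)=\ze$. Evaluating the objective at this feasible point gives
\begin{align*}
	J(\x^*)=\Lb(\x^*,\blmp^*)=\max_{\x\in\X}\Lb(\x,\blmp^*)=\phi(\blmp^*)=\phi^*,
\end{align*}
where the first equality uses $\h(\x^*)=\ze$, the third is the definition of $\phi$, and the last is dual optimality of $\blmp^*$. By weak duality, any $\x$ feasible for~(\ref{prob:central}) satisfies $J(\x)=\Lb(\x,\blmp^*)\le\max_{\x'\in\X}\Lb(\x',\blmp^*)=\phi(\blmp^*)=\phi^*$. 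Combining, $J(\x)\le\phi^*=J(\x^*)$ for every feasible $\x$, so $\x^*$ is globally optimal and the duality gap is zero. Equivalently, $(\x^*,\blmp^*)$ is a saddle point of $\Lb$, from which primal optimality follows by the saddle-point theorem even though~(\ref{prob:central}) is nonconvex.

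The main obstacle is precisely this last point: because the power balance~(\ref{pc1}) is nonlinear, Problem~(\ref{prob:central}) is nonconvex, and a zero duality gap cannot be asserted from general duality theory. The argument succeeds only because the unconstrained Lagrangian maximizer $\x(\blmp^*)$ turns out to be primal feasible, which is exactly what driving the dual gradient to zero delivers. The enabling ingredient is Lemma~\ref{lem:pd}: negative definiteness of $\nabla^2_{\x\x}\Lb$ makes $\x(\blmp)$ single-valued and $\phi$ differentiable with the clean gradient~(\ref{eq:grad}), which legitimizes both the gradient-descent convergence and the identification $\nabla\phi(\blmp^*)=\h(\x^*)=\ze$. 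Thus strong concavity of the Lagrangian is the structural feature that converts the generic no-gap obstruction into the single verifiable feasibility condition $\h(\x(\blmp^*))=\ze$.
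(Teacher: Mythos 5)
Your proposal is correct, and it takes a genuinely more direct route than the paper. The paper's proof is organized in two parts: part (i) establishes a standalone general lemma — for a generic equality-constrained problem (\ref{prob:q}), any global $\bomega$-max, $\bnu$-min saddle point of the Lagrangian is a global optimum, with feasibility extracted from the $\bnu$-min side by driving individual multipliers to $\pm\infty$ — and part (ii) verifies that $(\x^*,\blmp^*)$ is such a saddle point, using exactly the two facts you isolate: $\x^* = \argmax_{\x\in\X}\Lb(\x,\blmp^*)$ and $\h(\x^*)=\ze$ (the latter from $\nabla\phi(\blmp^*)=\ze$ via Eq.~(\ref{eq:grad})). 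You bypass the saddle-point apparatus entirely: feasibility of $\x^*$ comes straight from the vanishing dual gradient, and then $J(\x^*)=\Lb(\x^*,\blmp^*)=\phi(\blmp^*)$ together with weak duality, $J(\x)=\Lb(\x,\blmp^*)\le\phi(\blmp^*)$ for every primal-feasible $\x$, closes the argument. The two routes are mathematically equivalent — your weak-duality inequality is precisely the content of the paper's part (i) once feasibility is in hand — but yours is shorter and correctly exposes that full dual optimality of $\blmp^*$ is never used, only stationarity $\h(\x(\blmp^*))=\ze$; what the paper's packaging buys is a reusable general statement (a saddle point certifies global optimality without presupposing feasibility) that it can cite independently. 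You are also more explicit than the paper in checking conditions (i) and (ii) of Definition~\ref{def:ce}, which the paper merely asserts. One small caution: your parenthetical that the maximizer of $\ipsi_\T(\btheta,\blmp^*)$ over $\Btheta$ ``coincides with each TransCo maximizing $\ipsi_\T^i$ over $\Btheta^i$'' glosses the consensus coupling at shared buses $\N_\T^{i,j}$ — the per-TransCo problems in Definition~\ref{def:ce} decouple only once the shared-bus angle agreement is imposed — but the paper is equally terse on this point, so it is a shared imprecision rather than a gap in your proof.
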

\begin{proof}
Consider Problem (\ref{prob:q}), defined as 
\begin{align*}
	\max_{\bomega\in\Bomega\subseteq\Rr^W}\!\{G(\bomega)\!:\!\rb(\bomega)\!=\!(r_1(\bomega),\ldots,r_M(\bomega))\!=\!\ze\}\tag{Q}\label{prob:q}.
\end{align*}
Also, consider the following definition.
\vspace{0.25em}
\begin{definition}[Global $\bomega$-max, $\bnu$-min saddle point \cite{morgan2015explanation}] A point $(\hat \bomega,\hat \bnu)$ is a \emph{global $\bomega$-max, $\bnu$-min saddle point} for the Lagrangian $\M(\bomega,\bnu) = G(\bomega) - \bnu^\top\rb(\bomega)$ if and only if $\M(\bomega,\hat\bnu)\le\M(\hat\bomega,\hat\bnu)\le\M(\hat\bomega,\bnu)$ $\forall\,\bomega\in\Bomega$, $\bnu\in\Rr^M$.
\end{definition}
\vspace{0.25em}
The proof proceeds in two steps: (i) We prove a general result demonstrating that if $(\hat\bomega,\hat\bnu)$ is a global $\bomega$-max, $\bnu$-min saddle point for the Lagrangian $\M$ then $\hat\bomega$ is the global optimum for the problem (\ref{prob:q}) (similar to the proof found in \cite{morgan2015explanation}); (ii) We show that the pricing process generates a global $\x$-max, $\blmp$-min saddle point for the Lagrangian $\Lb$ of Problem (\ref{prob:central}).

\vspace{0.25em}
\noindent\underline{\emph{Part (i)}}: Assuming that $(\hat\bomega,\hat\bnu)$ is a global $\bomega$-max, $\bnu$-min saddle point, we have $\M(\hat\bomega,\hat\bnu)\le\M(\hat\bomega,\bnu)$ for all $\bnu$. Thus
\begin{align}\label{eq:p2_1}
	G(\hat\bomega) - \sum_{m=1}^M\hat\bnu_mr_m(\hat\bomega) \le G(\hat\bomega) -\sum_{m=1}^M\bnu_mr_m(\hat\bomega).
\end{align}
Let there exist an index $m'$ such that $r_{m'}(\hat\bomega) >0$, then we can choose $\nu_{m'}\gg0$ such that Eq. (\ref{eq:p2_1}) is violated. Similarly, let there exist an index $m''$ such that $r_{m''}(\hat\bomega) <0$, we can violate Eq. (\ref{eq:p2_1}) by choosing $\nu_{m''}\ll0$. Thus $r_{m}(\hat\bomega)=0$ for all $m$ and therefore $\hat\bomega$ is feasible for Problem (\ref{prob:q}).

Since $(\hat\bomega,\hat\bnu)$ is a global $\bomega$-max, $\bnu$-min saddle point, we also have $\M(\bomega,\hat\bnu)\le\M(\hat\bomega,\hat\bnu)$ for all $\bomega\in\Bomega$. Thus $G(\bomega) - \sum_{m=1}^M\hat\bnu_mr_m(\bomega) \le G(\hat\bomega) - \sum_{m=1}^M\hat\bnu_mr_m(\hat\bomega)$. Since $\rb(\bomega)=\ze$ for every feasible $\bomega$, we have that $G(\bomega)\le G(\hat\bomega)$ everywhere on $\{\bomega|\rb(\bomega)=\ze,\bomega\in\Bomega\}$ and thus $\hat\bomega$ is optimal for Problem (\ref{prob:q}).

\vspace{0.25em}
\noindent\underline{\emph{Part (ii)}}: Let $\x^* = \x(\blmp^*)= \argmax_{\x\in\X}\Lb(\x,\blmp^*)$, where $\blmp^*$ is the converged price vector obtained from the pricing process (Eq. \ref{eq:update}). The profile $\x^*$ is returned from the agents when provided with the price $\blmp^*$ (via the optimizations in Section \ref{ssec:response}). By assumption 3 and Corollary \ref{cor:admm}, $\x^*$ is unique. Notice that $(\x^*,\blmp^*)$ is a competitive equilibrium by Def. \ref{def:ce}. Also, due to the concavity of $\Lb$ (Lemma \ref{lem:pd}), $\phi(\blmp^*) = \Lb(\x^*,\blmp^*)\ge\Lb(\x,\blmp^*)$ for all $\x\in\X$. We know that $\nabla\phi(\blmp)|_{\blmp=\blmp^*} = \ze$ and thus, by Eq. (\ref{eq:grad}), $\h(\x(\blmp^*))=\h(\x^*)=\ze$. Consequently, $\Lb(\x^*,\blmp^*) = J(\x^*) -(\blmp^*)^\top\h(\x^*) =J(\x^*)= J(\x^*)-\blmp^\top\h(\x^*) = \Lb(\x^*,\blmp)$ 
for all $\blmp$. In summary $\Lb(\x,\blmp^*)\le\Lb(\x^*,\blmp^*)=\Lb(\x^*,\blmp)$ for all $\x\in\X,\,\blmp$ and thus $(\x^*,\blmp^*)$ is a global $\x$-max, $\blmp$-min saddle point for the Lagrangian $\Lb(\x,\blmp)$.

From parts (i) and (ii), we conclude that the pricing process generates the pair $(\x^*,\blmp^*)$ where $\x^*$ is a globally optimal solution to Problem (\ref{prob:central}).
\end{proof}

In summary, the pricing process generates the competitive equilibrium $(\x^*,\blmp^*)$ which results in a globally optimal solution $\x^*$ to the (nonconvex) social welfare maximization problem (\ref{prob:central}). Consequently, $\x^*$ is a Pareto efficient outcome. 

\vspace{0.5em}
\section{NUMERICAL EXAMPLE}

We demonstrate the performance of the pricing process on a modified version of the IEEE 14 bus test system. The ownership of generators in the modified system is split among three GenCos with $\pb^1 = (p_1^1,p_2^1)$, $\pb^2 = (p_3^2,p_6^2)$, and $\pb^3=(p_8^3)$. The network also consists of seven DistCos with $\eb^1 = (e_2^1,e_3^1)$, $\eb^2=(e_3^2,e_4^2)$, $\eb^3=(e_5^3)$, $\eb^4=(e_6^4,e_{11}^4,e_{12}^4)$, $\eb^5=(e_9^5,e_{10}^5)$, $\eb^6=(e_{12}^6,e_{13}^6)$, $\eb^7=(e_{14}^7)$ and inelastic demands (in MW) $s_2^1 = 15$, $s_5^3 = 10$, $s_{12}^4=15$, $s_{10}^5=10$, $s_{14}^6=15$. The ownership of lines is split among two TransCos, 
{
    \def\OldComma{,}
    \catcode`\,=13
    \def,{%
      \ifmmode%
        \OldComma\discretionary{}{}{}%
      \else%
        \OldComma%
      \fi%
    }%
$\E^1 = \big\{\!\{1,\!2\},\!\{1,\!5\},\!\{2,\!3\},\!\{2,\!4\},\!\{2,\!5\},\!\{3,\!4\},\!\{4,\!5\},\!\{4,\!7\},\!\{5,\!6\}\!\big\}$, $\E^2 = \big\{\!\{4,\!9\},\!\{6,\!11\},\!\{6,\!12\},\!\{6,\!13\},\!\{7,\!8\},\!\{7,\!9\},\!\{9,\!10\},\!\{9,\!14\},\!\{10,\!11\},\!\{12,\!13\},\!\{13,\!14\}\!\big\}$ with slack bus $\Ss = \{6\}$. 
}
Parameters for the TransCo message exchange process (Alg. 1) are $\rho = 0.21$, $\varepsilon_{\text{primal}} = 5$\ex{5}, $\varepsilon_{\text{dual}}=5$\ex{6}. Fig. \ref{fig:results} demonstrates the convergence of the process.

\begin{figure}[h!]
\begin{center}
\includegraphics[width=0.95\columnwidth]{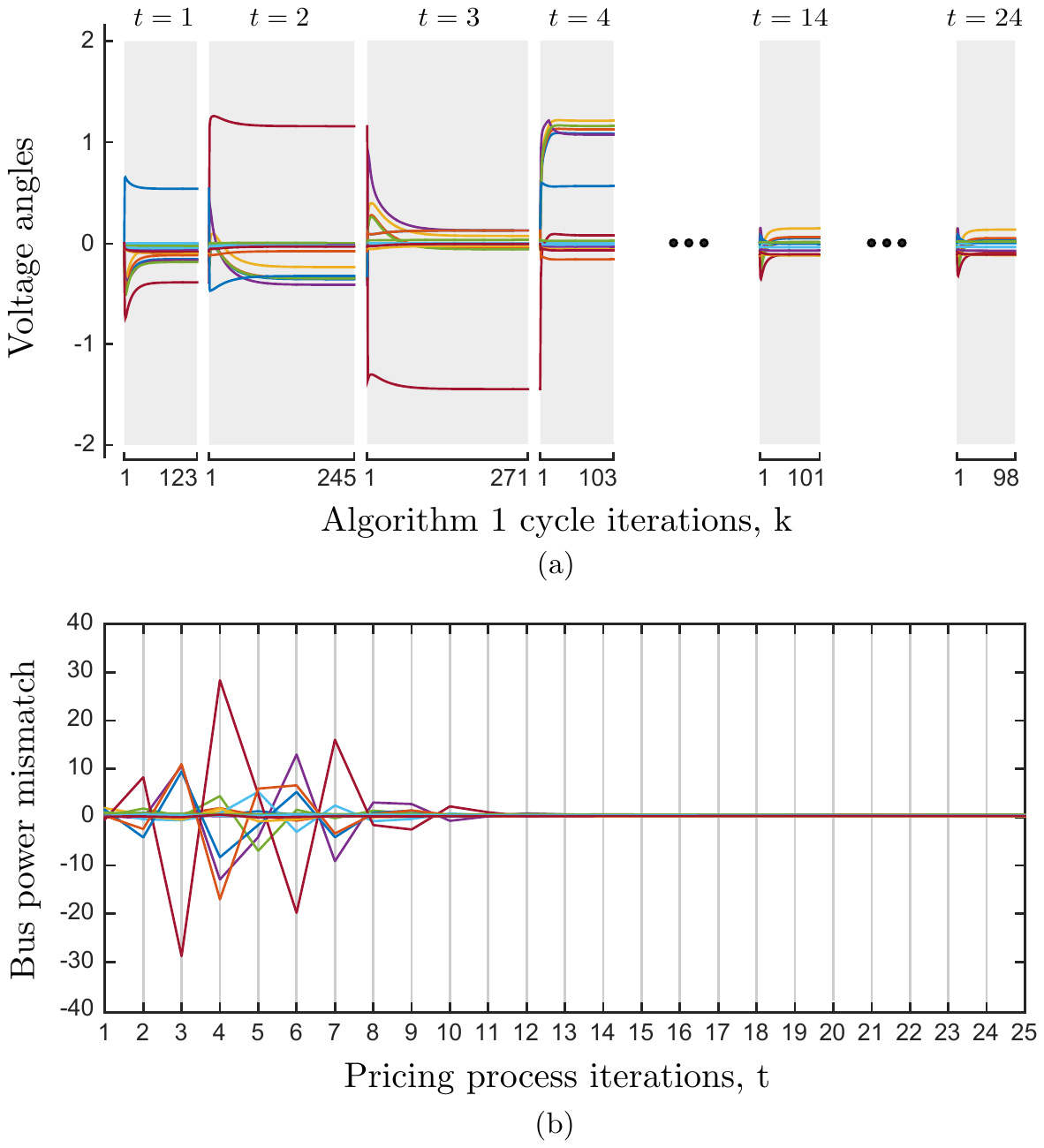}
\caption{\emph{Convergence of pricing process:} (a) TransCos reach an angle agreement for each price vector $\blmp^t$ via Alg. 1 (each negotiation cycle corresponds to a price vector); (b) The power mismatch at each bus $n$, $h_n(\x(\blmp^t)) = f_n(\btheta(\blmp^t)) - p_n(\blmp^t) + e_n(\blmp^t) + s_n$, converges to zero.}\label{fig:results}
\end{center}
\vspace{-2em}
\end{figure}

It is evident from Fig. \ref{fig:results}(b) that the pricing process generates a solution where the power balance equation is satisfied (condition (i) of Def. \ref{def:ce}). At the corresponding prices, agents report their surplus-maximizing responses, satisfying condition (ii) of Def. \ref{def:ce}. By Theorem \ref{thm:optimal}, the resulting competitive equilibrium is Pareto efficient.

\vspace{0.5em}
\section{CONCLUSIONS \& FUTURE WORK}
\label{sec:conc}

We have developed an electricity market model and an associated decentralized market mechanism that, under natural assumptions, ensures convergence to a Pareto efficient market (competitive) equilibrium. The market model includes multiple DistCos, GenCos, and (cooperative) TransCos all of which are assumed to be surplus-maximizing given the current set of LMPs. A market operator updates LMPs via a gradient method in order to achieve an operating point that satisfies the power balance equations and consequently clears the market.


\section*{APPENDIX}

\subsection{Proof of Lemma \ref{lem:ms}}

Let $\vb=\btheta^i$. The Hessian of $\ipsi_\T^i(\vb,\blmp)$, see Eq. (\ref{eq:ms}), with respect to $\vb$ is $\nabla^2_{\vb\vb}\ipsi_\T^i\big(\vb,\blmp\big)=-\frac{1}{2}\nabla^2_{\vb\vb}\big(\sum_{(n,m)\in\Ed^i}(\lambda_n + \lambda_m)\tilde g(v_{nm})\big)$, where the first order terms do not enter into the expression. Define $\iota=\N_\T^i$ as the (ordered) set of bus indices of TransCo $i$ and define
\begin{align*}
	A_{jk} =
	 \left\{\begin{array}{ll}   
	-\!\!\!\!\!\displaystyle\sum_{\{j,l\}\in\E^i}\!\!(\lambda_{\iota_j} + \lambda_{\iota_l})G_{\iota_j\iota_l} &  \text{ if } j=k\\
	-(\lambda_{\iota_j} + \lambda_{\iota_k})G_{\iota_j\iota_k} &  \text{ if } \{j,k\}\in\E^i\\[0.25em]
	0 &  \text{ if } \{j,k\}\not\in\E^i
\end{array}\right.
\end{align*}
for each $j,k=1,\ldots,|\iota| = |\N_\T^i|$. By assumption 2, there exists an index $s\in\iota$ that corresponds to a slack bus. The Hessian $\nabla^2\ipsi_\T^i$ is defined as matrix $A$ with the $s^\text{th}$ row and column removed. Consequently, $\nabla^2\ipsi_\T^i$ belongs to the class of \emph{irreducibly diagonally dominant} matrices, known to be non-singular (see Theorem 6.2.27 of \cite{horn1985matrix}). To see this, note that the Hessian is diagonally dominant for all rows. Additionally, it is strictly diagonally dominant in rows that correspond to buses that are immediately connected to a slack bus. By assumption 4, the diagonal elements of the Hessian are negative and by Prop. 2.2.20 of \cite{cottle1992linear}, $\nabla^2\ipsi_\T^i\prec\ze$.

\subsection{Proof of Lemma \ref{lem:pd}}

The Hessian of the Lagrangian, denoted by $\nabla_{\x\x}^2\Lb$, is a square, block-diagonal matrix of dimension $\sum_{i\in\D}|\N_{\D^e}^i| + \sum_{i\in\G}|\N_\G^i| + |\N\setminus\Ss|$. It consists of three blocks, ${\bf U}$, ${\bf C}$, and ${\bf M}$, where ${\bf U}$ and ${\bf C}$ are diagonal matrices consisting of elements $(u_n^i)''$ (corresponding to DistCo units) and $-(c_n^i)''$ (corresponding to GenCo units), respectively. By assumption 3, we have ${\bf U},{\bf C}\prec\ze$. Similar to the proof of Lemma \ref{lem:ms}, matrix ${\bf M}$ can be shown to be an irreducibly diagonally dominant matrix with a negative diagonal and thus, again by Prop. 2.2.20 of \cite{cottle1992linear}, ${\bf M}\prec\ze$. Since $\nabla_{\x\x}^2\Lb$ is block-diagonal with each block negative definite, we conclude $\nabla_{\x\x}^2\Lb\prec\ze$.

\bibliographystyle{IEEEtran}
\bibliography{references}

\end{document}